%
%
%
%
%

%
\RequirePackage{fix-cm}
\documentclass[smallextended]{svjour3}       
\smartqed  
\usepackage{graphicx}
%
%


\input{style/packages.sty}
\input{style/ee.sty}
\input{style/commands.sty}

%
%
\journalname{arXiv}
\begin{document}

\title{Assessing solution quality in risk-averse stochastic programs
}


\author{E. Ruben van Beesten \and
        Nick W. Koning \and
        David P. Morton
}


\institute{E. Ruben van Beesten \at
              Econometric Institute, Erasmus University Rotterdam, the Netherlands \\
              Faculty of Economics and Management, Norwegian University of Science and Technology, Trondheim, Norway\\
              \email{vanbeesten@ese.eur.nl}            
            \and
            Nick W. Koning \at
            Econometric Institute, Erasmus University Rotterdam, the Netherlands \\
            \email{n.w.koning@ese.eur.nl}            
           \and
            David P. Morton \at
            Department of Industrial Engineering and Management Sciences, Northwestern University, Evanston, IL, United States    \\
            \email{david.morton@northwestern.edu}
}

\date{Received: date / Accepted: date}

\maketitle

\begin{abstract}
    In optimization problems, the quality of a candidate solution can be characterized by the optimality gap. For most stochastic optimization problems, this gap must be statistically estimated. We show that for risk-averse problems, standard estimators are optimistically biased, which compromises the statistical guarantee on the optimality gap. 
    We introduce estimators for risk-averse problems that do not suffer from this bias. Our method relies on using two independent samples, each estimating a different component of the optimality gap. Our approach extends a broad class of optimality gap estimation methods from the risk-neutral case to the risk-averse case, such as the multiple replications procedure and its one- and two-sample variants. 
    We show that our approach is tractable and leads to high-quality optimality gap estimates for spectral and quadrangle risk measures. 
    Our approach can further make use of existing bias and variance reduction techniques.
\keywords{Stochastic programming \and Risk measures \and Optimality gap estimation \and Monte Carlo methods}
\subclass{90C15 \and 91B05} 
\end{abstract}

\newpage


\section{Introduction} \label{sec:introduction}

Consider a risk-averse stochastic program of the form
\begin{align}
    z^* := \min_{x \in \mathcal{X}} \rho(f_x). \label{eq:main}
\end{align}
Here, decision $x$ is constrained to a nonempty, compact set $\mathcal{X} \subseteq \R^d$. The function $f_x(\omega) := f(x, \xi(\omega))$%
\footnote{For example, \eqref{eq:main} is a two-stage recourse model if $f_x$ is of the form $f_x(\omega) = c^T x + \min_{y \in Y} \{ q(\omega)^T y \ : \ T(\omega) x + W(\omega) y \geq h(\omega) \}$, where the first term represents the first-stage costs and the second term is the optimal value of the second-stage problem.}
depends on $x \in \mathcal{X}$ and the random vector $\xi : \Omega \to \R^q$, which is governed by distribution $\Prob$ on the sample space $\Omega$.
The risk measure $\rho$ is a functional on a suitable space $\mathcal{Y}$ of random variables on $\Omega$, which includes $f_x$, for all $x \in \mathcal{X}$. 
We assume $\rho$ and $\mathcal{Y}$ are such that $\rho(f_x)$ is finite for all $x \in \mathcal{X}$, although we will sometimes we require stronger assumptions.

We seek to assess the quality of a candidate solution $\hat{x} \in \mathcal{X}$ to problem~\eqref{eq:main}, e.g., a solution produced by some algorithm.
That is, we want to know whether the \textit{optimality gap}
\begin{align*}
    G = \hat{z} -  z^*,
\end{align*}
between its objective value $\hat{z} := \rho(f_{\hat{x}})$ and the optimal value $z^*$ is small.

The usual approach is to find an upper bound $B \geq G$ on this gap.  If $B$ is small, then the optimality gap $G$ must also be small, so that the solution $\hat{x}$ is of high quality. When the dimension of the random vector $\xi$ is modest and/or we have special structure, such as convexity of the objective with respect to the random vector, there is a literature pursuing deterministic upper bounds~$B$ on~$G$; see the review contained in \cite{narum2023safe}. 

For large-scale stochastic programs that lack such structure, Monte Carlo methods dominate the literature, and we seek a statistical upper bound $B$
\cite{mak1999monte}. In this setting we typically use a sample of $n$ i.i.d.\ draws from the distribution~$\Prob$ and derive a \textit{statistical} estimate $G_n$~of~$G$~\cite{bayraksan2009assessing}. A statistical approximation $B^\alpha$ of the $(1-\alpha)$-quantile of~$G_n$ then serves as a probabilistic upper bound: $B^\alpha \geq G$ with an approximate probability of at least $1-\alpha$ \cite{mak1999monte}.

Unfortunately, this procedure fails in the risk-averse case, as we show in Section~\ref{sec:invalidity}. The reason is that the sample estimator of $\hat{z} = \rho(f_{\hat{x}})$ is downward biased, which sabotages the upward bias of the estimator $G_n$ of $G$. This compromises the guarantee that $B^\alpha \geq G$ with high probability.

In this paper, we address this issue by developing an upward biased estimator of the optimality gap $G$ for a risk-averse stochastic program. We do so for the large class of 
risk measures with minimization representation
\begin{align}
    \rho(Y) = \min_{u \in \mathcal{U}} \E\big[ r(Y, u) \big], \qquad Y \in \mathcal{Y},\label{eq:rho_min}
\end{align}
for some function $r: \R \times \mathcal{U} \to \R$ and closed set $\mathcal{U}$. 
This class includes popular risk measures such as the conditional value-at-risk, entropic risk, and spectral risk, and any risk measure from
the ``expectation risk quadrangles'' of~\cite{rockafellar2013fundamental}; we return to these risk measures in Section~\ref{sec:quality}.
We assume $r(y,\cdot)$ is lower semi-continuous for every $y \in \R$, and we assume $r$ and $\mathcal{U}$ are such that the optimal value of~\eqref{eq:rho_min} is obtained on $\mathcal{U}$ for every $Y \in \mathcal{Y}$. Similarly, we assume $r(f_x,u)$ is lower semi-continuous on $\mathcal{X} \times \mathcal{U}$, w.p.1. With our assumptions on $\mathcal{X}$ and~$\mathcal{Y}$, this suffices to ensure the optimal value of~\eqref{eq:main} is achieved on $\mathcal{X}$. The same will hold for sampling-based counterparts in what follows. 

Section~\ref{sec:valid_estimator} develops our key idea, which uses a second independent ``fresh'' sample of $m$ draws from $\Prob$ to estimate the minimizer of~\eqref{eq:rho_min}. This independent estimator avoids overfitting and thereby produces an estimator $G_{n, m}$ for $G$ that is upward biased. Importantly, our new estimator $G_{n, m}$ allows us to extend a broad class of methods for assessing solution quality in risk-neutral stochastic programs to the risk-averse case, as shown in Section~\ref{sec:conf_interval}. In particular, we give a recipe for deriving probabilistic upper bounds $B^\alpha$ on the optimality gap using the single/two-/multiple replications procedures from~\cite{bayraksan2006assessing,mak1999monte}. 

Section~\ref{sec:quality} discusses the quality of our estimator $G_{n, m}$ for $G$. For example risk measures $\rho$, we illustrate the tractability of computing $G_{n,m}$. 
Moreover, we prove for these examples that the upward bias of $G_{n,m}$ caused by using the fresh sample vanishes as its sample size $m$ grows large.
Finally, Section~\ref{sec:coherent} extends our approach to the class of law invariant coherent risk measures.

\section{Traditional optimality gap estimator} \label{sec:invalidity}

\subsection{Existing literature: risk-neutral case}\label{sec:existing_lit_risk_neutral}

For risk-neutral stochastic programs, i.e., when $\rho = \E$, the optimality gap estimator from the literature \cite{bayraksan2006assessing,mak1999monte} is a plug-in estimator based on an i.i.d. sample $\mathcal{S}_n = (\omega^1, \ldots, \omega^n)$ from $\Prob$. That is, this estimator has form $G_n := \hat{z}_n - z_n^*$, where $\hat{z}_n := \E^n[f_{\hat{x}}]$ is the sample mean estimator of the objective $\hat{z} := \E[f_{\hat{x}}]$ at the candidate solution $\hat{x}$, and $z_n^*:= \min_{x \in \mathcal{X}} \E^n[f_x]$ is the sample estimator of the population optimal value $z^* = \min_{x \in \mathcal{X}} \E[f_{x}]$, also known as its sample average approximation \cite{Kleywegt2002}. In what follows, we allow more general forms of samples $\mathcal{S}_n = (\omega^1, \ldots, \omega^n)$ drawn from $\Prob$. 
Typically, $\hat{z}_n$ and $z_n^*$ are computed using the same sample $\mathcal{S}_n$, which ensures $G_n \ge 0$.

We use notation $\E^n[Y] := \tfrac{1}{n} \sum_{i=1}^n Y(\omega^i)$ to represent the sample mean or empirical expectation with respect to sample $\mathcal{S}_n$. This should not be confused with $\E^{\mathcal{S}_n}[\cdot]$, which denotes the expectation with respect to all possible samples $\mathcal{S}_n$ of size $n$ from distribution $\Prob$. Here, the samples can be i.i.d.\ or sampled in another manner, e.g., to reduce bias or variance. We assume that $\mathcal{S}_n$ satisfies:
\begin{equation}\label{eqn:unbiased}
\E^{\mathcal{S}_n}[\E^n[f_{x}]] = \E[f_{x}], \ \forall x \in \mathcal{X}.
\end{equation}
Note that the $x$ in equation~\eqref{eqn:unbiased} does not depend on $\mathcal{S}_n$ and vice versa.

In the risk-neutral case, $G_n$ is an upward biased estimator of the optimality gap $G$. Specifically, by equation~\eqref{eqn:unbiased},
$\E^{\mathcal{S}_n}[\hat{z}_n] = \E^{\mathcal{S}_n}[\E^n[f_{\hat{x}}]] = \E[f_{\hat{x}}] = \hat{z}$, and  
with $x_n^* \in \argmin_{x \in \mathcal{X}} \E^n[f_x]$ and $x^* \in \argmin_{x \in \mathcal{X}} \E [f_x]$ we have
 $\E^{\mathcal{S}_n}[z_n^*] = \E^{\mathcal{S}_n}[\E^n[f_{x_n^*}]]\leq \E^{\mathcal{S}_n}[\E^n[f_{x^*}]] = \E[f_{x^*}] =  z^*$, where the penultimate equality holds by~\eqref{eqn:unbiased}. As a result,  $\E^{\mathcal{S}_n}[G_n] = \E^{\mathcal{S}_n}[\hat{z}_n] - \E^{\mathcal{S}_n}[z^*_n] \geq G$; see~\cite{mak1999monte,norkin1998branch}.

To derive a probabilistic upper bound $B^\alpha$, we estimate the $(1-\alpha)$-quantile of $G_n$.
One simple approach is to form multiple replications $G_n^1, \ldots, G_n^k$, of $G_n$ and choose $B^\alpha$ equal to the $(1-\alpha)$-quantile of the associated Student's $t$ random variable. 
By the central limit theorem, this yields the approximate probabilistic upper bound on $G$, $\mathbb{P}(B^\alpha \geq G) \gtrapprox 1 - \alpha$, for sufficiently large~$k$. This is the multiple replication procedure of \cite{mak1999monte}.
More advanced versions based on only one or two replications of $G_n$ also exist \cite{bayraksan2006assessing}. These procedures are now common practice in the risk-neutral case. Various authors have further developed the methodology by reducing the bias and/or variance of the optimality gap estimator, extending the procedure to multi-stage problems, and otherwise improving the method, e.g.,
\cite{%
chen2021confidence,%
chen2023software,%
chen2024distributions,%
chiralaksanakul2004assessing,%
de2017assessing,%
lam2017empirical,%
linderoth2006empirical,%
love2011overlapping,%
partani2006jackknife,%
stockbridge2016variance}.

\subsection{Risk-averse case: invalid upper bound due to a downward bias}\label{sec:risk_averse_invalid}

We now show that with risk measures of form~\eqref{eq:rho_min}, the approach just sketched does not yield a valid probabilistic upper bound $B^\alpha$. Analogous to the risk-neutral case, we define the sample estimator $G_n$ of the optimality gap $G$ as
\begin{align}
    G_n := \hat{z}_n - z_n^*,
\end{align}
where $\hat{z}_n := \rho^n( f_{\hat{x}})$ and $z_n^* := \min_{x \in \mathcal{X}} \rho^n(f_x)$ estimate $\hat{z}=\rho(f_{\hat{x}})$~and~$z^*$. 

The validity of $B^\alpha$ breaks down in the risk-averse case, as the estimator~$G_n$ is not guaranteed to be upward biased for $G$ (Proposition~\ref{prop:ineqs}), which is caused by a downward bias of the sample estimator $\rho^n$ of the risk measure $\rho$ (Lemma~\ref{lemma:bias}), defined as
\begin{equation}\label{eqn:empirical_risk_Y}
    \rho^n(Y) := \min_{u \in \mathcal{U}} \E^n[r(Y,u)], \qquad Y \in \mathcal{Y}.
\end{equation}
The latter bias occurs because the minimizer in $\mathcal{U}$ ``overfits'' to the sample.
\begin{lemma}[Bias of $\rho^n$] \label{lemma:bias}
 Let $\rho$ be a risk measure of the form \eqref{eq:rho_min}, let $\rho^n$ be its empirical analog~\eqref{eqn:empirical_risk_Y} with respect to $\mathcal{S}_n$, and assume $\mathcal{S}_n$ satisfies \linebreak $\E^{\mathcal{S}_n}[\E^n [r(Y,u)]] = \E[r(Y,u)], \ \forall u \in \mathcal{U}$. Then, for all $Y \in\mathcal{Y}$,
    \begin{align*}
        \E^{\mathcal{S}_n} \left[\rho^n(Y) \right] \leq \rho(Y).
    \end{align*}
\end{lemma}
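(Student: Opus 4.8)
The plan is to exploit the fact that $\rho^n$ is an infimum-type functional and that the expectation operator $\E^{\mathcal{S}_n}$ commutes with a fixed choice of $u$. The key observation is that for \emph{any} fixed $u \in \mathcal{U}$ that does not depend on the sample $\mathcal{S}_n$, the empirical objective $\E^n[r(Y,u)]$ is an unbiased estimator of $\E[r(Y,u)]$ by the stated assumption. Since $\rho^n(Y)$ takes the minimum over $\mathcal{U}$, it can only be smaller than the empirical objective evaluated at any particular fixed $u$.

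Concretely, I would proceed as follows. Fix $Y \in \mathcal{Y}$ and let $u^* \in \argmin_{u \in \mathcal{U}} \E[r(Y,u)]$ be a population minimizer, which exists by the assumption that the optimal value of~\eqref{eq:rho_min} is attained on $\mathcal{U}$. Crucially, $u^*$ depends only on the distribution $\Prob$ and on $Y$, not on the random sample $\mathcal{S}_n$, so it is a legitimate fixed point at which to invoke the unbiasedness hypothesis. For every realization of $\mathcal{S}_n$ we have the pointwise inequality
\begin{align*}
    \rho^n(Y) = \min_{u \in \mathcal{U}} \E^n[r(Y,u)] \leq \E^n[r(Y,u^*)].
\end{align*}
Taking the expectation $\E^{\mathcal{S}_n}[\,\cdot\,]$ over all samples preserves this inequality, and then applying the unbiasedness assumption $\E^{\mathcal{S}_n}[\E^n[r(Y,u^*)]] = \E[r(Y,u^*)]$ (valid because $u^*$ is fixed) yields
\begin{align*}
    \E^{\mathcal{S}_n}\big[\rho^n(Y)\big] \leq \E^{\mathcal{S}_n}\big[\E^n[r(Y,u^*)]\big] = \E[r(Y,u^*)] = \rho(Y),
\end{align*}
where the final equality is just the definition of $u^*$ as the population minimizer. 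This completes the argument.

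The only subtle point—and the step I would be most careful about—is the independence of $u^*$ from $\mathcal{S}_n$. The whole mechanism of the bias rests on contrasting a sample-\emph{independent} minimizer $u^*$ (at which unbiasedness holds) against the sample-\emph{dependent} minimizer realized inside $\rho^n$ (which ``overfits'' and drives the bias downward). The inequality $\min_u \E^n[r(Y,u)] \le \E^n[r(Y,u^*)]$ is trivial pointwise, but it is essential that we do \emph{not} try to apply the unbiasedness identity to the sample minimizer itself, since that identity is only assumed for fixed $u$. This is precisely the same logic as the risk-neutral argument sketched earlier in the excerpt for $z_n^*$, where a fixed population optimizer $x^*$ is substituted to bound the expected sample optimal value; here the roles of $x$ and $u$ are exchanged. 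No continuity or lower-semicontinuity machinery is needed for the inequality itself—those assumptions only serve to guarantee that the minimizer $u^*$ exists—so the proof is essentially a one-line interchange-of-$\min$-and-$\E$ argument once attainment is in hand.
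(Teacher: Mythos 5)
Your proof is correct and is essentially the paper's own argument: substitute the sample-independent population minimizer $u^*$ into the empirical objective, use the pointwise inequality $\rho^n(Y) \leq \E^n[r(Y,u^*)]$, take expectations over $\mathcal{S}_n$, and apply the unbiasedness assumption at the fixed $u^*$. Your added remark on why $u^*$ must not depend on the sample correctly identifies the mechanism the paper also emphasizes (the sample minimizer ``overfits''), so there is nothing to correct.
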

\begin{proof}
    Let $u_n^* \in \argmin_{u \in \mathcal{U}} \E^n[r(Y,u)]$ and $u^* \in \argmin_{u \in \mathcal{U}} \E [r(Y,u)]$. Then,
    \begin{align*}
        \E^{\mathcal{S}_n} \left[\rho^n(Y) \right] &= \E^{\mathcal{S}_n} \left[ \E^n\big[ r(Y, u_n^*) \big] \right] \leq \E^{\mathcal{S}_n} \left[ \E^n\big[ r(Y, u^*) \big] \right] 
        = \E\big[  r(Y, u^*) \big] = \rho(Y).
    \end{align*}
    \qed
\end{proof}

\begin{proposition}[Bias of $z^*$ and $\hat{z}_n$] \label{prop:ineqs}
Let $\rho$ be a risk measure of the form \eqref{eq:rho_min}, let $\rho^n$ be its empirical analog~\eqref{eqn:empirical_risk_Y}, and assume $\mathcal{S}_n$ satisfies \begin{equation}\label{eqn:unbiased_risk}
    \E^{\mathcal{S}_n}[\E^n [r(f_x,u)]] = \E[r(f_x,u)], \ \forall u \in \mathcal{U}, x \in \mathcal{X}.
\end{equation}  
    Let $\hat{x} \in \mathcal{X}$ be used to define $\hat{z}_n = \rho^n( f_{\hat{x}})$, and let $z_n^* = \min_{x \in \mathcal{X}} \rho^n(f_x)$.
    Then,
    \begin{align}
        \E^{\mathcal{S}_n} \left[ z^*_n \right] \leq z^* \quad \text{and} \quad \E^{\mathcal{S}_n} \left[ \hat{z}_n \right] \leq \hat{z}, \label{eq:ineqs}
    \end{align}
where $z^*$ is the optimal value of~\eqref{eq:main} and $\hat{z}=\rho(f_{\hat{x}})$.
\end{proposition}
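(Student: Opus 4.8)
The plan is to reduce both inequalities in~\eqref{eq:ineqs} to Lemma~\ref{lemma:bias}, which already establishes the downward bias of $\rho^n$ at any \emph{fixed} argument $Y \in \mathcal{Y}$. Observe that the hypothesis~\eqref{eqn:unbiased_risk} of the proposition is precisely the hypothesis of Lemma~\ref{lemma:bias} applied to the random variable $Y = f_x$, for each fixed $x \in \mathcal{X}$. Hence for every such $x$ we are entitled to write $\E^{\mathcal{S}_n}[\rho^n(f_x)] \leq \rho(f_x)$, and the whole proof amounts to choosing the right fixed $x$ in each case.

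The inequality $\E^{\mathcal{S}_n}[\hat{z}_n] \leq \hat{z}$ is then immediate. Since the candidate solution $\hat{x}$ is fixed and does not depend on the sample $\mathcal{S}_n$, I would simply set $Y = f_{\hat{x}}$ in Lemma~\ref{lemma:bias} to obtain $\E^{\mathcal{S}_n}[\hat{z}_n] = \E^{\mathcal{S}_n}[\rho^n(f_{\hat{x}})] \leq \rho(f_{\hat{x}}) = \hat{z}$, with no further argument required.

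The inequality $\E^{\mathcal{S}_n}[z^*_n] \leq z^*$ is the substantive part, and I would handle it exactly as in the risk-neutral sketch of Section~\ref{sec:existing_lit_risk_neutral}. Fix a population minimizer $x^* \in \argmin_{x \in \mathcal{X}} \rho(f_x)$, so that $z^* = \rho(f_{x^*})$; such an $x^*$ exists by the compactness and lower semi-continuity assumptions recorded after~\eqref{eq:rho_min}. Because $x^* \in \mathcal{X}$, for every realization of the sample we have the pointwise bound $z^*_n = \min_{x \in \mathcal{X}} \rho^n(f_x) \leq \rho^n(f_{x^*})$. Taking $\E^{\mathcal{S}_n}$ of both sides and then invoking Lemma~\ref{lemma:bias} with $Y = f_{x^*}$ gives $\E^{\mathcal{S}_n}[z^*_n] \leq \E^{\mathcal{S}_n}[\rho^n(f_{x^*})] \leq \rho(f_{x^*}) = z^*$.

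The only delicate point, and the step I expect to be the main obstacle, is this decoupling: the value $z^*_n$ hides a \emph{nested} minimization over both the outer variable $x$ and the inner variable $u \in \mathcal{U}$, and both minimizers overfit to the sample, so Lemma~\ref{lemma:bias} cannot be applied directly to the sample-optimal $x$. The key observation that makes the argument go through is that I need only replace the sample-dependent outer minimizer by the fixed population minimizer $x^*$; the inner overfitting in $u$ is then absorbed automatically by Lemma~\ref{lemma:bias}, since that lemma already accounts for the bias introduced by the inner minimization at a fixed outer argument.
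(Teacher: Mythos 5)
Your proof is correct and takes essentially the same approach as the paper: both arguments use the fixed population minimizers as feasible comparison points in the sample problem and then invoke the unbiasedness condition~\eqref{eqn:unbiased_risk}. The only cosmetic difference is that you replace the outer minimizer $x_n^*$ by $x^*$ pointwise and let Lemma~\ref{lemma:bias} absorb the inner minimization over $u$, whereas the paper replaces the joint sample minimizer $(x_n^*,u_n^*)$ by $(x^*,u^*)$ in a single step.
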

\begin{proof}
    Using Lemma~\ref{lemma:bias}, $\E^{\mathcal{S}_n} \left[ z^*_n \right] = \E^{\mathcal{S}_n} \left [ \E^n [ r(f_{x_n^*},u_n^*) ] \right ]  \leq  \E^{\mathcal{S}_n} \left[ \E^n[r(f_{x^*},u^*)] \right] = \min_{x \in \mathcal{X}} \rho(f_x) = z^*$. The second statement follows similarly, with $x_n^*$ and $x^*$ replaced by $\hat{x}$. \qed
\end{proof}
When $\rho = \E$, the right inequality in \eqref{eq:ineqs} holds with equality \cite{mak1999monte}, so 
$
\E^{\mathcal{S}_n} \left[ G_n \right] = \E^{\mathcal{S}_n} \left[ \hat{z}_n - z^*_n \right] \geq \hat{z} - z^* = G,
$ i.e., $G_n$ is an upward biased estimator of~$G$ in the risk-neutral case. In general, however, the right inequality in \eqref{eq:ineqs} need not hold with equality, in which case upward bias of $G_n$ cannot be guaranteed.

\section{Conservative optimality gap estimator} \label{sec:valid_estimator}

To address the issue in Section \ref{sec:risk_averse_invalid}, we derive a conservative, i.e., upward biased estimator of $G$ for a risk-averse stochastic program. We start with Proposition~\ref{prop:ineqs}'s observation that $\hat{z}_n$ is downward biased because in
$
\rho^n(f_{\hat{x}}) = \min_{u \in \mathcal{U}} \E^n\left[ r(f_{\hat{x}}, u) \right],
$
the optimal solution $u^*_n$ ``overfits'' to the sample $\mathcal{S}_n$. This suggests reversing the bias by using another $\hat{u} \in \mathcal{U}$ that does not depend on~$\mathcal{S}_n$. Indeed, any fixed $\hat{u} \in \mathcal{U}$ yields an upward biased estimator $\E^n\big[ r(f_{\hat{x}},\hat{u})\big]$ of~$\rho(f_{\hat{x}})$:
\begin{align}
    \E^{\mathcal{S}_n} \left[ \E^n\big[ r(f_{\hat{x}},\hat{u}) \big] \right]= \E\big[ r(f_{\hat{x}}, \hat{u}) \big] \geq \min_{u \in \mathcal{U}} \E\big[ r(f_{\hat{x}}, u) \big] = \rho(f_{\hat{x}}). \label{eq:u_bar_ineq}
\end{align}

If $\hat{u} = u^* \in \argmin_{u \in \mathcal{U}} \E\big[ r(f_{\hat{x}}, u)\big]$ then \eqref{eq:u_bar_ineq}'s inequality holds with equality, so $\E^n\big[ r(f_{\hat{x}},\hat{u}) \big]$ is an unbiased estimator of $\rho(f_{\hat{x}})$.  Of course, computing $u^*$ is typically impossible.

Instead, we estimate $u^*$ by drawing a second {\it fresh sample} $\mathcal{S}_m = (\tilde{\omega}^1, \ldots, \tilde{\omega}^m)$ from $\Prob$, independent of $\mathcal{S}_n$. Then, we replace $\hat{u}$ by the sample estimator\footnote{If $\rho$ comes from a risk quadrangle \cite{rockafellar2013fundamental}, then $u^*$ is called the \textit{statistic} associated with $\rho$. In that terminology, our approach is to estimate this statistic by the ``sample statistic'' $u^*_m$.} 
to form the two-sample estimator $G_{n,m}$ as follows:
\begin{subequations}\label{eqn:two_sample_estimator}
\begin{align}
    u^*_m & \in  \argmin_{u \in \mathcal{U}}, 
    {\E}^m\big[ r(f_{\hat{x}}, u) \big] \label{eqn:two_sample_estimator_a}, \\
    \hat{z}_{n,m} & :=  \E^n[r(f_{\hat{x}}, u^*_m)] \label{eq:z_hat_nm}, \\
    z^*_n & := \min_{x \in \mathcal{X}} \rho^n (f_x), \label{eqn:two_sample_estimator_c} \\
    G_{n,m} & := \hat{z}_{n,m} - z^*_n,
\end{align}
\end{subequations}
where ${\E}^m$ denotes $\mathcal{S}_m$'s  empirical expectation. The motivation is that for $m$ large enough  $u^*_m$'s quality is high and the upperward bias in ~\eqref{eq:z_hat_nm} is low.

Optimization over $u$ in~\eqref{eqn:two_sample_estimator_a} is typically easy relative to the optimization over $x$ in~\eqref{eqn:two_sample_estimator_c}. Hence, we can choose $m \gg n$. Indeed, the optimization over $u$ is often univariate, and for the risk measures we consider in Section~\ref{sec:quality} the effort is cheaper than sorting. 
%
%
The following theorem establishes that $G_{n,m}$ is an upward biased estimator of $G$; see Figure~\ref{fig:plot_4_full} for an illustration.

\begin{theorem}[Upward bias of $G_{n,m}$]\label{theorem:upperward_bias}
Assume the hypotheses of Proposition~\ref{prop:ineqs} hold. Further, let $\mathcal{S}_m$ satisfy~\eqref{eqn:unbiased_risk} and be independent of $\mathcal{S}_n$. Let $u_m^*$, $\hat{z}_{n,m}$, and $G_{n,m}$ be defined as in~\eqref{eqn:two_sample_estimator}.
%
Then,
    \begin{align*}
        \E^{\mathcal{S}_n} \left[ \hat{z}_{n,m} \right] \geq \hat{z}, \qquad \text{ and } \qquad \E^{\mathcal{S}_n} \left[ G_{n,m} \right] \geq G,
    \end{align*}
    so $G_{n,m}$ is an upward biased estimator of $G$.
\end{theorem}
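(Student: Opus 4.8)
The plan is to prove the two claimed inequalities in sequence, since the second follows from the first together with Proposition~\ref{prop:ineqs}. The key structural observation is that the fresh sample $\mathcal{S}_m$ and the primary sample $\mathcal{S}_n$ are independent, which lets me condition on one while averaging over the other. The whole argument hinges on the fact that $u_m^*$ depends \emph{only} on $\mathcal{S}_m$ and not on $\mathcal{S}_n$, so that inequality~\eqref{eq:u_bar_ineq} — which was derived for an arbitrary \emph{fixed} $\hat{u}$ — can be applied pointwise for each realization of $u_m^*$.

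For the first inequality, $\E^{\mathcal{S}_n}[\hat{z}_{n,m}] \geq \hat{z}$, I would first condition on the fresh sample $\mathcal{S}_m$, treating the resulting minimizer $u_m^*$ as a fixed element of $\mathcal{U}$. Given this conditioning, I take the expectation over $\mathcal{S}_n$ of $\hat{z}_{n,m} = \E^n[r(f_{\hat{x}}, u_m^*)]$. Because $u_m^*$ is now fixed relative to $\mathcal{S}_n$, the unbiasedness hypothesis~\eqref{eqn:unbiased_risk} applies to the function $r(f_{\hat{x}}, u_m^*)$, giving
\begin{align*}
    \E^{\mathcal{S}_n}\big[ \E^n[r(f_{\hat{x}}, u_m^*)] \,\big|\, \mathcal{S}_m \big] = \E\big[ r(f_{\hat{x}}, u_m^*) \big] \geq \min_{u \in \mathcal{U}} \E\big[ r(f_{\hat{x}}, u) \big] = \rho(f_{\hat{x}}) = \hat{z},
\end{align*}
where the inequality holds for every realization of $u_m^*$ precisely because $u_m^* \in \mathcal{U}$. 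Averaging this conditional bound over $\mathcal{S}_m$ by the tower property — and noting the right-hand side $\hat{z}$ is a constant — yields $\E^{\mathcal{S}_n}[\hat{z}_{n,m}] \geq \hat{z}$.

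For the second inequality, I would combine the result just obtained with Proposition~\ref{prop:ineqs}. Since $z_n^*$ depends only on $\mathcal{S}_n$, that proposition gives $\E^{\mathcal{S}_n}[z_n^*] \leq z^*$. Then by linearity,
\begin{align*}
    \E^{\mathcal{S}_n}[G_{n,m}] = \E^{\mathcal{S}_n}[\hat{z}_{n,m}] - \E^{\mathcal{S}_n}[z_n^*] \geq \hat{z} - z^* = G.
\end{align*}
I would take care that $\E^{\mathcal{S}_n}$ in the statement of $\hat{z}_{n,m}$ is really a joint expectation over both samples — the notation in the theorem is slightly compressed — so I would state explicitly at the outset that all expectations are over the joint law of $(\mathcal{S}_n, \mathcal{S}_m)$, and that the conditioning/tower step is what makes this rigorous.

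The main obstacle is conceptual rather than computational: I must justify applying the fixed-$\hat{u}$ bound~\eqref{eq:u_bar_ineq} when $\hat{u}$ is replaced by the random $u_m^*$. The subtlety is that one cannot simply substitute a random argument into an inequality that was proven for deterministic arguments; the independence of $\mathcal{S}_m$ from $\mathcal{S}_n$ is exactly what licenses the conditioning argument that resolves this. A secondary technical point worth checking is measurability — that $u_m^*$ is a measurable function of $\mathcal{S}_m$ so that the conditional expectation and tower property are well-defined — which is guaranteed by the lower semi-continuity assumptions on $r$ ensuring the minimizer in~\eqref{eqn:two_sample_estimator_a} is attained and can be selected measurably.
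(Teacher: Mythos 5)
Your proposal is correct and follows essentially the same route as the paper: the paper's proof substitutes $\hat{u} = u_m^*$ into inequality~\eqref{eq:u_bar_ineq} (justified by the independence of $\mathcal{S}_m$ and $\mathcal{S}_n$) and then combines with $\E^{\mathcal{S}_n}[z_n^*] \leq z^*$ from Proposition~\ref{prop:ineqs}, exactly as you do. Your conditioning-on-$\mathcal{S}_m$/tower-property argument is simply a more explicit justification of that substitution step, which the paper leaves implicit.
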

\begin{proof}
    The first inequality follows from substituting $\hat{u} = u^*_m$ in \eqref{eq:u_bar_ineq}. Combining this with $\E^{\mathcal{S}_n} \left[ z^*_n \right] \leq z^*$ from Proposition~\ref{prop:ineqs} yields the second inequality. \qed
\end{proof}

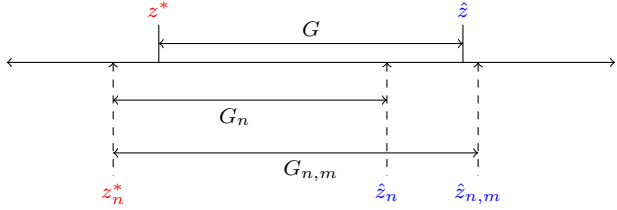
\begin{figure}
    \centering
    \centering
\begin{tikzpicture}   
    \draw[<->] (-4,0) -- (4,0);                                                 
    \draw (-2,0) -- + (0,0.5) node[above] {\red{$z^*$}};                        
    \draw (2,0) -- + (0,0.5) node[above] {\blue{$\hat{z}$}};                    
    \draw[<->] (-2,0.25) -- (2,0.25);                                           
    \draw (0,0.25) node[above] {$G$};                                           
    \draw[<-, dashed] (-2.6,0) -- + (0,-1.5) node[below] {\red{$z^*_n$}};           
    \draw[<-, dashed] (1,0) -- + (0,-1.5) node[below] {\blue{$\hat{z}_n$}};       
    \draw[<-, dashed] (2.2,0) -- + (0,-1.5) node[below] {\blue{$\hat{z}_{n,m}$}};
    \draw[<->] (-2.6,-0.5) -- (1,-0.5);                                           
    \draw (-1,-0.5) node[below] {$G_n$};                                        
    \draw[<->] (-2.6,-1.2) -- (2.2,-1.2);                                        
    \draw (0,-1.2) node[below] {$G_{n,m}$};                                   
\end{tikzpicture}
    \caption{Illustration of the optimality gap and its estimators. Dashed arrows indicate the expectation of the indicated random object.}
    \label{fig:plot_4_full}
\end{figure}


\section{A valid probabilistic upper bound on the optimality gap} \label{sec:conf_interval}

Based on the new optimality gap estimator $G_{n, m}$ we now derive a probabilistic upper bound $B^\alpha$ such that $\Prob(G \leq B^\alpha) \gtrapprox 1 - \alpha$. In doing so, we show how existing methods for the risk-neutral case extend to the risk-averse case. 

The key idea is to frame the risk-averse problem \eqref{eq:main} as a risk-neutral problem in a higher-dimensional space, as done in Proposition~\ref{prop:ineqs}'s proof.
In particular, consider problem~\eqref{eq:main} with $\rho$ as in~\eqref{eq:rho_min}. Then with $g_{x,u} := r(f_x, u)$,
\begin{align}
    z^* = \min_{x \in \mathcal{X}} \rho(f_x) = \min_{x \in \mathcal{X}} \min_{u \in \mathcal{U}} \E \big[ r(f_x, u) \big] = \min_{x \in \mathcal{X}, u \in \mathcal{U}} \E[g_{x,u}]. \label{eq:reformulation}
\end{align} 

This reformulation could be used to compute a candidate pair $(\hat{x},\hat{u})$ by solving $\min_{x \in \mathcal{X}, u \in \mathcal{U}} \E^{n_0}[g_{x,u}]$ under yet another independent sample. However, even if we compute $\hat{x} \in \mathcal{X}$ in this way (our approach allows for alternatives), we recommend {\it not} using such a $\hat{u}$. Because $\min_{u \in {\mathcal U}} \E^m[g_{\hat{x}, u}]$ is typically much easier to solve than when we simultaneously optimize over $x$ and $u$, we can choose $m \gg n_0$, which improves the quality of $\hat{u}$ and reduces associated bias.



Thus given $\hat{x} \in \mathcal{X}$, we compute $u_m^*$ with $\mathcal{S}_m$ using~\eqref{eqn:two_sample_estimator_a}. Then, using $(\hat{x},u_m^*)$ as the candidate solution to~\eqref{eq:reformulation} 
we apply standard risk-neutral theory for estimating its optimality gap via 
$
G_{n,m} = \E^n[g_{\hat{x}, u^*_m}] - \min_{x \in \mathcal{X}, u \in \mathcal{U}} \E^n[g_{x,u}],
$
which like the true optimality gap is non-negative.
%
This leads to the following recipe for producing a probabilistic upper bound in the risk-averse case. 

\paragraph{Recipe for deriving an (approximate) probabilistic upper bound on $G$:}
\begin{enumerate}
    \item Take $\hat{x} \in \mathcal{X}$ as input.
    \item Reformulate the risk-averse problem~\eqref{eq:main} as a risk-neutral problem~\eqref{eq:reformulation} using $y=(x,u)$ as the decision vector.
    \item Form sample $\mathcal{S}_m$ and compute $u_m^*$ via~\eqref{eqn:two_sample_estimator_a} to yield $\hat{y}=(\hat{x}, u^*_m)$ as a candidate solution to reformulation~\eqref{eq:reformulation}.
    \item Using samples that are independent of $\mathcal{S}_m$, apply any risk-neutral method that produces an (approximate) probabilistic upper bound $B^\alpha$ for the optimality gap of the reformulated problem at $\hat{y}$. \label{final_step}
\end{enumerate}
The resulting $B^\alpha$ is an (approximate) probabilistic upper bound for the optimality gap of the risk-averse problem \eqref{eq:main} at $\hat{x}$.

\begin{remark}
    An attractive feature of the way we form the recipe is that we do not need to re-derive assumptions in the risk-averse setting to ensure the procedure is valid. Rather, we simply need to ensure that the assumptions required by the risk-neutral method used in step~\ref{final_step} are satisfied by the reformulated problem \eqref{eq:reformulation} with decision vector $(x,u) \in \mathcal{X} \times \mathcal{U}$ and objective function $g_{x,u}$. 
    For example, the multiple replications procedure in \cite{mak1999monte} requires that $g_{x,u}$ has a finite mean and variance for each $x \in \mathcal{X}$ and $u \in \mathcal{U}$. The single- and two-replication procedures from \cite{bayraksan2006assessing} require stronger assumptions: $\mathcal{X} \times \mathcal{U}$ is compact, $\E \left[ \sup_{x \in \mathcal{X}, \, u \in \mathcal{U}} (g_{x,u})^2 \right] < \infty$, and $g_{x,u}$ is continuous in $(x,u)$, w.p.1. It is natural that $\mathcal{X}$ is bounded for decision problems, but typically $\mathcal{U}$ is unbounded, e.g., $\mathcal{U}=\R$. For specific functions $g_{x,u}=r(f_x,u)$, and underlying random vectors $\xi$, bounds for $u^*$ can be pre-computed and $\mathcal{U}$ altered accordingly. 
\end{remark}

\begin{remark}
The multiple replications procedure, as well as the single- and two-replication procedures, can form
$
G_{n,m} = \E^n[g_{\hat{x}, u^*_m}] - \min_{x \in \mathcal{X}, u \in \mathcal{U}} \E^n[g_{x,u}]
$
using the same samples $\mathcal{S}_n$ for both terms. This ensures that, like the true optimality gap, the estimator is non-negative. Moreover, using common random numbers in this way tends to reduce the variance of the resulting estimator.  
\end{remark}

\begin{remark}
Our recipe is {\it not} designed for settings in which generating a large number of samples is computationally expensive or impossible. Rather in our context, generating Monte Carlo samples (i.e., $\mathcal{S}_m$ and $\mathcal{S}_n$) is computationally cheap and, as noted above, optimizing over $u$ for fixed $x$ is computationally cheap, all relative to optimizing over $x \in \mathcal{X}$ and optimizing over $(x,u) \in \mathcal{X} \times \mathcal{U}$.
\end{remark}

The recipe's output $B^\alpha$ is an (approximate) probabilistic upper bound on the optimality gap $G$ of the risk-averse problem~\eqref{eq:main} at $\hat{x}$. Concretely, our recipe extends the multiple replications procedure from \cite{mak1999monte} and its one- and two-sample variants from \cite{bayraksan2006assessing} to the risk-averse case, and in step~\ref{final_step} it can make use of bias and variance reduction techniques referenced in Section~\ref{sec:invalidity}.

\section{Tractability and quality of $\hat{z}_{n,m}$} \label{sec:quality}

Compared to the estimator $G_n = \hat{z}_n - z^*_n$ from the risk-neutral literature, our estimator $G_{n,m} = \hat{z}_{n,m} - z^*_n$ uses the same estimator $z^*_n$ of $z^*$, but a different estimator of $\hat{z}$, namely $\hat{z}_{n,m}$. Properties of the estimator $z^*_n$ and of $\hat{z}_n$ (when $\hat{x}=x_n^*$ is computed by a sample problem) have been widely studied in the literature.
In this section, we investigate the tractability and quality---in terms of bias---of~$\hat{z}_{n,m}$. We do so for two subclasses of minimization risk measures: expectation quadrangle risk measures and spectral risk measures.

\subsection{Expectation quadrangle risk measures} \label{subsec:quadrangle}

We first discuss risk measures from an expectation risk quadrangle~\cite{rockafellar2013fundamental}, which have form $\rho(Y) = \min_{u \in \mathcal{U}} \E[r(Y,u)]$, with $\mathcal{U} = \R$ and $r(Y,u) = u + v(Y - u)$, for some convex function $v : \R \to \R$. The quadrangle is called \textit{regular} if $v(0) = 0$ and $v(x) > x$ for all $x \neq 0$. Examples include conditional value-at-risk (CVaR), with $v(t) = (1-\alpha)^{-1} \max\{t, 0\}$ for parameter $\alpha \in [0,1)$, and the entropic risk measure (ENT), with $v(t) = \theta^{-1} (e^{\theta t} - 1)$ for parameter $\theta > 0$.

\subsubsection{Tractability}

To assess tractability of $\hat{z}_{n,m}$, note that we can write
\begin{align*}
    \hat{z}_{n,m} = \E^n\left[ r(f_{\hat{x}}, u^*_m) \right] = \E^n\left[ u^*_m + v(f_{\hat{x}} - u^*_m) \right] =  u^*_m  + \frac{1}{n} \sum_{i=1}^n v(f_{\hat{x}}(\omega^i) - u^*_m).
\end{align*}
This is tractable provided $f_{\hat{x}}(\omega)$ and $u^*_m$ are tractable. The former is requisite to our approach, and the latter holds because it involves univariate optimization over $u$ of a convex function, at least when the measures are {\it regular} \cite{rockafellar2013fundamental}. For some cases we effectively have analytical solutions. 
\begin{example}[CVaR: tractability] \label{ex:cvar_tractability}
    For CVaR, $u^*_m$ is the sample quantile $q^m_\alpha(f_{\hat{x}})$, i.e., the 
    $\ceil{\alpha m}$th 
order statistic of $f_{\hat{x}}(\tilde{\omega}^1), \ldots, f_{\hat{x}}(\tilde{\omega}^m)$.
\end{example}
\begin{example}[ENT: tractability] \label{ex:ent_tractability}
    For the entropic risk measure, $u^*_m$ is the sample risk measure itself
${\ent}_\theta^m(f_{\hat{x}})
= \theta^{-1} \log 
{\E}^m
\left[ e^{\theta f_{\hat{x}}}\right] = \theta^{-1} \log \left( \frac{1}{m} \sum_{j=1}^m e^{\theta f_{\hat{x}}(\tilde{\omega}^j)} \right)$.
\end{example}

\subsubsection{Quality}

The following result provides a quality guarantee for $\hat{z}_{n,m}$: we show that under a regularity condition on the expectation quadrangle risk measure 
the bias of $\hat{z}_{n,m}$ vanishes as $m \to \infty$.

\begin{proposition}[Asymptotic unbiasedness of $\hat{z}_{n,m}$, quadrangle]\label{prop:quadrangle_convergence}
Assume the hypotheses of Theorem~\ref{theorem:upperward_bias} hold. Further,
    let risk measure $\rho(f_{\hat{x}})= \min_u \E \left[ u + v(f_{\hat{x}} - u) \right]$ be from a regular expectation risk quadrangle. Suppose that $\E[v(f_{\hat{x}})] < \infty$ and $\E\big[| \delta(f_{\hat{x}}) | \big] < \infty$ for some subgradient $\delta(f_{\hat{x}})$ of $u \mapsto v(f_{\hat{x}} - u)$ at $u=0$. Finally, suppose that the level sets of $\E \left[ r(f_{\hat{x}}, \cdot) \right]$ are bounded.
Then, $\lim_{m \to \infty} \E^{\mathcal{S}_n}\left[ \hat{z}_{n,m} \right] = \hat{z}$ with probability one.
\end{proposition}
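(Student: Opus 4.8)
The plan is to push the expectation over $\mathcal{S}_n$ through first, which collapses the quantity of interest to the \emph{population} objective evaluated at the fresh-sample minimizer $u_m^*$, and then to run a sample-average-approximation consistency argument in the single variable $u$. Write $\phi(u) := \E[r(f_{\hat{x}}, u)]$ and $\phi_m(u) := \E^m[r(f_{\hat{x}}, u)]$ for the population and fresh-sample objectives, so that $u_m^* \in \argmin_{u \in \mathcal{U}} \phi_m(u)$ by~\eqref{eqn:two_sample_estimator_a} and $\hat{z} = \min_{u \in \mathcal{U}} \phi(u)$. Since $u_m^*$ depends only on $\mathcal{S}_m$, which is independent of $\mathcal{S}_n$, I would condition on $\mathcal{S}_m$ and apply the unbiasedness assumption~\eqref{eqn:unbiased_risk} at the now-fixed point $u = u_m^*$ to obtain
\[
\E^{\mathcal{S}_n}\!\left[\hat{z}_{n,m}\right] = \E^{\mathcal{S}_n}\!\left[\E^n[r(f_{\hat{x}}, u_m^*)]\right] = \phi(u_m^*).
\]
Thus the claim is equivalent to $\phi(u_m^*) \to \min_u \phi(u) = \hat{z}$ as $m \to \infty$, with probability one over the fresh samples $\mathcal{S}_m$. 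Note that $r(f_{\hat{x}}, \cdot) = u + v(f_{\hat{x}} - u)$ is convex in $u$, so both $\phi$ and each $\phi_m$ are convex.

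Next I would establish the structure of the limit $\phi$ and the convergence $\phi_m \to \phi$. The assumption $\E[v(f_{\hat{x}})] < \infty$ gives $\phi(0) < \infty$, while the subgradient inequality $v(f_{\hat{x}} - u) \ge v(f_{\hat{x}}) + \delta(f_{\hat{x}})\,u$ combined with $\E[|\delta(f_{\hat{x}})|] < \infty$ yields $\phi(u) = u + \E[v(f_{\hat{x}} - u)] \ge u + \E[v(f_{\hat{x}})] + u\,\E[\delta(f_{\hat{x}})] > -\infty$; hence $\phi$ is a proper convex function, finite at $0$, and by assumption inf-compact (bounded level sets), so $\hat{z} = \phi(u^*)$ is attained. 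For the convergence I would apply the strong law of large numbers to the fresh sample: for each fixed $u$, $\phi_m(u) \to \phi(u)$ almost surely, and doing this simultaneously over the countable dense set $\mathcal{U} \cap \mathbb{Q}$ gives pointwise a.s.\ convergence on a dense set. Because the $\phi_m$ and $\phi$ are finite convex functions, pointwise convergence on a dense set upgrades automatically to \emph{uniform convergence on every compact subset} of the interior of $\dom \phi$, a standard fact for convex functions.

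To conclude I would combine inf-compactness with this uniform-on-compacts convergence. Fix $\alpha > \hat{z}$ and let $K := \{u : \phi(u) \le \alpha\}$, which is compact. Since $\min_u \phi_m(u) \le \phi_m(u^*) \to \phi(u^*) = \hat{z}$, the empirical minima eventually drop below $\alpha$; using the level-boundedness of $\phi$ and the uniform convergence on a compact neighborhood of $K$, I would argue via convexity that the minimizers $u_m^*$ cannot escape to infinity and in fact lie in a fixed compact set $K'$ for all large $m$, almost surely. On $K'$ we have $\sup_{u \in K'} |\phi_m(u) - \phi(u)| \to 0$, so
\[
\phi(u_m^*) = \big[\phi(u_m^*) - \phi_m(u_m^*)\big] + \min_u \phi_m(u) \longrightarrow 0 + \hat{z},
\]
where the bracketed term vanishes by uniform convergence on $K'$ and $\min_u \phi_m(u) \to \hat{z}$ follows from the same uniform bound restricted to the trapped minimizers. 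This delivers $\phi(u_m^*) \to \hat{z}$ a.s., which is the assertion.

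I expect the main obstacle to be the minimizer-control step: showing that the $u_m^*$ remain in a fixed compact set eventually rather than drifting to infinity. This is exactly where the bounded-level-set hypothesis is indispensable, and it must be married carefully to the mode of convergence---uniform only on compacts---using convexity to rule out escape. A secondary point to pin down is that the fresh-sample scheme $\mathcal{S}_m$ obeys a strong law of large numbers (not merely the unbiasedness~\eqref{eqn:unbiased_risk}), which underlies the almost-sure pointwise convergence driving the whole argument.
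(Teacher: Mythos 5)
Your proposal is correct, and its opening step---using independence of $\mathcal{S}_m$ and $\mathcal{S}_n$ together with the unbiasedness condition~\eqref{eqn:unbiased_risk} to collapse $\E^{\mathcal{S}_n}\left[ \hat{z}_{n,m} \right]$ to $\phi(u_m^*) = \E\left[ r(f_{\hat{x}}, u_m^*) \right]$, thereby reducing the claim to consistency of the fresh-sample minimizer---is exactly the reduction the paper makes. Where you diverge is in how that consistency is established. The paper does not argue it directly: it invokes Theorem~3.4 of \cite{king1991epi} (epi-consistency of convex stochastic programs), whose hypotheses are precisely the proposition's assumptions (finiteness of $\E[v(f_{\hat{x}})]$ at $\bar{u}=0$, the integrable subgradient $1-\delta(f_{\hat{x}})$, continuity of the finite convex $v$), to conclude that every cluster point of $\{u_m^*\}_{m=1}^\infty$ minimizes $\E\left[ r(f_{\hat{x}}, \cdot) \right]$, and then uses the bounded-level-set hypothesis via Exercise~7.32 and Theorem~7.33 of \cite{rockafellar1998variational} to guarantee that a cluster point exists. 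You instead prove consistency by hand: a strong law on a countable dense set, the standard upgrade to locally uniform convergence for finite convex functions, and a one-dimensional ``trapping'' argument confining the minimizers $u_m^*$ to a fixed compact interval; that trapping step is sound precisely because $\mathcal{U}=\R$, where two points at which $\phi_m$ exceeds $\phi_m(u^*)$ fence in all minimizers of a convex function. What your elementary route buys is a self-contained proof with no appeal to epi-convergence machinery; what the paper's citation buys is reusability, since the identical King--Wets template is applied again for spectral risk measures, where $\mathcal{U}$ sits inside the infinite-dimensional space $L_2([0,1),\mu)$ and your locally-uniform-convergence/compactness argument would not carry over. Finally, the caveat you flag---that $\mathcal{S}_m$ must support a strong law of large numbers, not merely~\eqref{eqn:unbiased_risk}---is genuine but shared: the paper's appeal to \cite{king1991epi} equally presupposes i.i.d.\ (or comparable) sampling, so this is an implicit strengthening of the hypotheses in both arguments rather than a defect unique to yours.
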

\begin{proof}
    The result follows from Theorem~3.4 in \cite{king1991epi}; we first check that its conditions are satisfied. 
    Clearly, $\mathcal{U} = \R$ is a reflexive Banach space with separable dual $\R$. Furthermore, for $\bar{u} = 0$, we have $\E[\bar{u} + v(f_{\hat{x}} - \bar{u})] = \E[v(f_{\hat{x}})] < \infty$ by assumption. Moreover, $1 - \delta(f_{\hat{x}}) \in \partial_u r(f_{\hat{x}},\bar{u})$ satisfies $\E| 1 - \delta(f_{\hat{x}}) | \leq 1 + \E\big[|\delta(f_{\hat{x}})|\big] < \infty$ by assumption. The remaining conditions follow from the fact that $v$ is continuous since it is convex and finite-valued on $\R$. Thus, by Theorem~3.4 in \cite{king1991epi} it follows that with probability one, every cluster point of the sequence $\{ u_m^*\}_{m=1}^\infty$ solves $\min_{u \in \mathcal{U}} \E\left[ r(f_{\hat{x}}, u) \right] = \hat{z}$. 
Because the level sets of $\E \left[ r(f_{\hat{x}}, \cdot) \right]$ are bounded we know
that, with probability one, at least one cluster point of $\{ u_m^*\}_{m=1}^\infty$ exists; see Exercise 7.32 and Theorem 7.33 of \cite{rockafellar1998variational}. 
By continuity of $r$ in $u$ it follows that with probability one, the sequence with elements $\E^{\mathcal{S}_n}\left[ \hat{z}_{n,m} \right] = \E^{\mathcal{S}_n}\left[ \E^n\left[ r(f_{\hat{x}}, u^*_m) \right] \right] = \E\left[ r(f_{\hat{x}}, u^*_m) \right]$ has a unique cluster point $\hat{z}$, i.e.,  $\lim_{m \to \infty} \E^{\mathcal{S}_n}\left[ \hat{z}_{n,m} \right] = \hat{z}$. \qed
\end{proof}

\begin{example}[CVaR: quality] \label{ex:cvar_quality}
    Conditional value-at-risk ($\cvar_\alpha$), $\alpha \in [0,1)$, is from the regular expectation risk quadrangle with $v(t) = (1-\alpha)^{-1} \max\{t, 0\}$. We have $\E[v(f_{\hat{x}})] = (1-\alpha)^{-1} \E[\max\{f_{\hat{x}}, 0\}] < \infty$ because $\E[f_{\hat{x}}]$ is finite, as it is upper bounded by $\cvar_\alpha(f_{\hat{x}}) < \infty$. 
    Furthermore, any subgradient $\delta$ of $v$ satisfies $\E\big[ | \delta | \big] \leq \E\big[ (1-\alpha)^{-1} ] < \infty$. 
    Finally, it is easy to verify that level sets of $\E \left[ r(f_{\hat{x}}, \cdot) \right]$ are bounded.
    Thus, Proposition~\ref{prop:quadrangle_convergence} applies.
\end{example}
\begin{example}[ENT: quality]
    Entropic risk measure, $\ent_\theta(Y) = \theta^{-1} \log \E\left[ e^{\theta Y} \right]$ with $\theta > 0$, is from the regular expectation risk quadrangle with $v(t) = \theta^{-1}(e^{\theta t} - 1)$. 
    As $\ent_\theta(f_{\hat{x}}) = \theta^{-1} \log \E[e^{\theta f_{\hat{x}}}] < \infty$, the expectation $\E[e^{\theta f_{\hat{x}}}]$ is finite. Thus, we have $\E[v(f_{\hat{x}})] = \theta^{-1}(\E[e^{\theta f_{\hat{x}}}] - 1) \leq \theta^{-1}\E[e^{\theta f_{\hat{x}}}] < \infty$.
    Moreover, as $v$ is differentiable, $\delta(f_{\hat{x}}) = v'(f_{\hat{x}}) = e^{\theta f_{\hat{x}}}$, and thus, $\E\big[ | \delta(f_{\hat{x}})| \big] = \E\big[ e^{\theta f_{\hat{x}}} \big] < \infty$. Finally, it is again easy to verify that level sets of $\E \left[ r(f_{\hat{x}}, \cdot) \right]$ are bounded.
    Thus, Proposition~\ref{prop:quadrangle_convergence} applies.    
\end{example}

\subsection{Spectral risk measures}

Next, we discuss spectral risk measures \cite{acerbi2002spectral}, which have the form
\begin{align}
    \rho(Y) = \int_{[0,1)} \cvar_\alpha(Y) d\mu(\alpha), \qquad Y \in\mathcal{Y}, \label{eq:spectral}
\end{align}
for some probability measure $\mu$ on the interval $[0,1)$, called the Kusuoka representer of $\rho$.
%
Before discussing quality and tractability of $\hat{z}_{n,m}$, we show that spectral risk measures are of the form~\eqref{eq:rho_min}.

\begin{lemma}[Spectral risk measures are of form \eqref{eq:rho_min}]
    Let $\rho$ be a spectral risk measure defined on the space of 
    non-negative 
    random variables in $\mathcal{Y}$. Then, $\rho$ is of the form \eqref{eq:rho_min} with $\mathcal{U} = \{ u : [0,1) \to \R_+\}$ and $r(z,u) = \int_{[0,1)} [ u(\alpha) + (1-\alpha)^{-1}(z - u(\alpha))^+ ] d\mu(\alpha)$.
\end{lemma}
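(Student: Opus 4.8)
The plan is to show that the spectral risk measure, defined by the integral representation~\eqref{eq:spectral}, can be rewritten by substituting in the known minimization representation of each $\cvar_\alpha$ and then interchanging the minimization with the integration over $\alpha$. The starting point is the Rockafellar--Uryasev formula, which states that for each fixed $\alpha \in [0,1)$ we have $\cvar_\alpha(Y) = \min_{t \in \R} \{ t + (1-\alpha)^{-1}\E[(Y-t)^+] \}$. Substituting this into~\eqref{eq:spectral} gives
\begin{align*}
    \rho(Y) = \int_{[0,1)} \min_{t \in \R} \big\{ t + (1-\alpha)^{-1}\E[(Y - t)^+] \big\} \, d\mu(\alpha).
\end{align*}

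The key step is to move the pointwise (in $\alpha$) minimization outside the integral, replacing the scalar minimizer $t$ for each $\alpha$ by a single minimization over functions $u : [0,1) \to \R_+$. The heuristic is the interchange rule: $\int \inf_t F(\alpha,t)\, d\mu(\alpha) = \inf_{u(\cdot)} \int F(\alpha, u(\alpha))\, d\mu(\alpha)$, where the infimum on the right ranges over measurable selections $u$. Carrying this out identifies the candidate $\mathcal{U} = \{u : [0,1) \to \R_+\}$ and $r(z,u) = \int_{[0,1)} [u(\alpha) + (1-\alpha)^{-1}(z-u(\alpha))^+]\, d\mu(\alpha)$, so that $\E[r(Y,u)] = \int_{[0,1)}\{u(\alpha) + (1-\alpha)^{-1}\E[(Y-u(\alpha))^+]\}\, d\mu(\alpha)$ by Fubini, and minimizing over $u \in \mathcal{U}$ recovers $\rho(Y)$.

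The two things I would verify carefully are the interchange itself and the restriction to non-negative $u$. For the interchange, the ``$\geq$'' direction is immediate: any admissible $u$ gives an integrand pointwise at least the pointwise infimum, so the left side is a lower bound for the right-side objective. For ``$\leq$'', I would exhibit a measurable selection $\alpha \mapsto u(\alpha)$ that attains (or approximately attains) the inner minimum for $\mu$-almost every $\alpha$; this is where a measurable-selection argument (e.g.\ via normal integrands, as in Rockafellar--Wets Theorem~14.60, which is the same source the paper leans on elsewhere) is needed. The restriction $u(\alpha) \geq 0$ is justified because $Y$ is assumed non-negative, so the optimal $t$ in the Rockafellar--Uryasev problem (an $\alpha$-quantile of $Y$) is non-negative; hence restricting to non-negative $u$ loses nothing.

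The main obstacle is making the interchange of integration and minimization rigorous, in particular ensuring the existence of a measurable minimizing selection so that the infimum over $\mathcal{U}$ is actually attained (the paper's standing assumption is that the minimum in~\eqref{eq:rho_min} is achieved). I expect this to follow from standard normal-integrand / measurable-selection theory once one checks that $(\alpha,t) \mapsto t + (1-\alpha)^{-1}\E[(Y-t)^+]$ is a normal integrand, jointly measurable in $\alpha$ and lower semicontinuous and convex in $t$, with the required integrability so that Fubini applies and the pointwise minimizers assemble into a measurable function $u^* \in \mathcal{U}$.
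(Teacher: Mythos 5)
Your proposal is correct, but it does substantially more work than the paper: where you re-derive the minimization-over-functions representation from the Rockafellar--Uryasev formula via a min--integral interchange with a measurable-selection argument, the paper simply cites Proposition~3.2 of \cite{acerbi2002portfolio}, which states exactly that $\rho(Y) = \min_{u \in \mathcal{U}} \int_{[0,1)} \E\left[u(\alpha) + (1-\alpha)^{-1}(Y - u(\alpha))^+\right] d\mu(\alpha)$, and then swaps the integral over $\alpha$ and the expectation by Tonelli's theorem to arrive at $\rho(Y) = \min_{u \in \mathcal{U}} \E[r(Y,u)]$. So the paper's proof is two lines with the hard step outsourced to a citation, while yours is self-contained --- you are in effect proving the cited proposition from scratch, which is a legitimate and arguably more instructive route. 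Two refinements would tighten your version. First, the measurable-selection step needs no normal-integrand machinery here: the pointwise minimizer can be taken explicitly to be the quantile function $u^*(\alpha) = q_\alpha(Y)$, which is non-decreasing in $\alpha$, hence Borel measurable, attains the inner minimum for every $\alpha$, and is non-negative because $Y \geq 0$; this single observation settles the ``$\leq$'' direction of your interchange, the attainment of the minimum over $\mathcal{U}$, and the harmlessness of restricting to non-negative $u$ all at once. Second, for the final swap of $\int d\mu$ and $\E$, invoke Tonelli rather than Fubini: the integrand $u(\alpha) + (1-\alpha)^{-1}(Y - u(\alpha))^+$ is non-negative for every $u \in \mathcal{U}$ precisely because $Y \geq 0$ and $u \geq 0$, so no joint-integrability hypothesis needs checking --- this is exactly where the lemma's non-negativity assumption enters the paper's argument, and it spares you a verification that could otherwise fail for badly behaved $u \in \mathcal{U}$.
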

\begin{proof}
    First, 
    $\rho(Y) = \min_{u \in \mathcal{U}} \int_{[0,1)} \E\left[u(\alpha) + (1-\alpha)^{-1} (Y - u(\alpha))^+ \right] d\mu(\alpha)$ by Proposition~3.2 of \cite{acerbi2002portfolio}. By non-negativity of $Y$ and Tonelli's theorem, we can swap the integral and expectation, yielding $\rho(Y) = \min_{u \in \mathcal{U}} \E\left[ r(Y, u) \right]$. 
    \qed
\end{proof}

\begin{remark}
    In most applications, the restriction $Y \geq 0$ is without loss, as $f_x \geq 0$ for all $x \in \mathcal{X}$ is typically satisfied, possibly after translation with a constant. 
\end{remark}

\subsubsection{Tractability} \label{subsubsec:spectral_tractability}

For tractability of $\hat{z}_{n,m}$, first consider $u^*_m$. From Example~\ref{ex:cvar_tractability}, we know that $u^*_m(\alpha)$ is the $\ceil{\alpha m}$th order statistic of $f_{\hat{x}}(\tilde{\omega}^1), \ldots, f_{\hat{x}}(\tilde{\omega}^m)$, $\alpha \in [0,1)$. Thus, $u^*_m(\alpha)$ is constant on each interval $\left( \tfrac{k-1}{m}, \frac{k}{m} \right]$, $k=1,\ldots,m$. Thus, the integral
$
    \hat{z}_{n,m} = \int_{[0,1)} \left( u^*_m(\alpha) + (1-\alpha)^{-1}\E^n\left[ (f_{\hat{x}} - u^*_m(\alpha))^+ \right]  \right) d\mu(\alpha)
$
collapses to two sums. The first involves coefficients $\mu_0, \ldots, \mu_m$, where $\mu_0 = \mu(\{0\})$ and $\mu_k =  \mu\left( (\tfrac{k-1}{m}, \tfrac{k}{m}] \right)$, $k=1,\ldots,m$. The second involves coefficients 
$\bar{\mu}_0 = \mu(\{0\})$ and $\bar{\mu}_k = \int_{\left(\frac{k-1}{m}\right)^+}^{\frac{k}{m}} (1-\alpha)^{-1} d\mu\left(\alpha \right)$, $k=1,\ldots,m$. Hence, $\hat{z}_{n,m}$ is easy to compute.

\subsubsection{Quality}

Under a regularity condition on the spectral risk measure $\rho$, 
we can show that the bias of $\hat{z}_{n,m}$ vanishes as $m \to \infty$. 
The restriction to non-negative $f_{\hat{x}}$ is without loss in most applications.

\begin{proposition}[Asymptotic unbiasedness of $\hat{z}_{n,m}$, spectral]
    Let $\rho$ be a spectral risk measure defined on the space of non-negative random variables in~$\mathcal{Y}$
    whose Kusuoka representer $\mu$ has a finite support.
    Moreover, assume $f_{\hat{x}}$ is non-negative.
    Then, $\lim_{m \to \infty} \E^{\mathcal{S}_n}\left[ \hat{z}_{n,m} \right] = \hat{z}$ with probability one.
\end{proposition}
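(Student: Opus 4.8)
The plan is to exploit the finite support of the Kusuoka representer $\mu$ to reduce the spectral case to finitely many copies of the $\cvar$ case already settled in Example~\ref{ex:cvar_quality}. I would write the support as $\{\alpha_1, \ldots, \alpha_K\} \subseteq [0,1)$ with weights $p_j := \mu(\{\alpha_j\}) > 0$ and $\sum_{j=1}^K p_j = 1$, so that the defining integral \eqref{eq:spectral} collapses to the finite sum $\rho(Y) = \sum_{j=1}^K p_j \cvar_{\alpha_j}(Y)$.

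First I would observe that, in the minimization representation of $\rho$, the objective depends on $u \in \mathcal{U} = \{u : [0,1) \to \R_+\}$ only through the values $u(\alpha_1), \ldots, u(\alpha_K)$, and that these values are optimized independently of one another. Consequently the infinite-dimensional minimization over $\mathcal{U}$ decouples into $K$ univariate $\cvar$ minimizations, and the fresh-sample minimizer satisfies $u^*_m(\alpha_j) = q^m_{\alpha_j}(f_{\hat{x}})$, the sample $\alpha_j$-quantile of Example~\ref{ex:cvar_tractability}. Substituting this into $\hat{z}_{n,m}$ and applying the same collapse of the integral yields the decomposition
\[
\hat{z}_{n,m} = \sum_{j=1}^K p_j \hat{z}_{n,m}^{(j)}, \qquad \hat{z}_{n,m}^{(j)} := \E^n\big[ u^*_m(\alpha_j) + (1-\alpha_j)^{-1}(f_{\hat{x}} - u^*_m(\alpha_j))^+ \big],
\]
where $\hat{z}_{n,m}^{(j)}$ is exactly the two-sample estimator $\hat{z}_{n,m}$ associated with the single risk measure $\cvar_{\alpha_j}$.

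Next I would apply the quadrangle result componentwise. Each $\cvar_{\alpha_j}$ is a regular expectation quadrangle risk measure, and its finiteness $\cvar_{\alpha_j}(f_{\hat{x}}) < \infty$ follows from $0 \leq p_j \cvar_{\alpha_j}(f_{\hat{x}}) \leq \rho(f_{\hat{x}}) = \hat{z} < \infty$ together with $p_j > 0$, using $f_{\hat{x}} \geq 0$ so that each $\cvar_{\alpha_j}(f_{\hat{x}}) \geq 0$. Hence the hypotheses checked in Example~\ref{ex:cvar_quality} hold for each $\alpha_j$, and Proposition~\ref{prop:quadrangle_convergence} gives $\lim_{m \to \infty} \E^{\mathcal{S}_n}[\hat{z}_{n,m}^{(j)}] = \cvar_{\alpha_j}(f_{\hat{x}})$ with probability one.

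Finally, I would assemble the pieces: taking $\E^{\mathcal{S}_n}$ through the finite sum gives $\E^{\mathcal{S}_n}[\hat{z}_{n,m}] = \sum_{j=1}^K p_j \E^{\mathcal{S}_n}[\hat{z}_{n,m}^{(j)}]$, and since a finite sum of sequences that each converge with probability one also converges with probability one (the relevant event is a finite intersection of probability-one events over the shared fresh sample $\mathcal{S}_m$), we obtain $\lim_{m \to \infty} \E^{\mathcal{S}_n}[\hat{z}_{n,m}] = \sum_{j=1}^K p_j \cvar_{\alpha_j}(f_{\hat{x}}) = \rho(f_{\hat{x}}) = \hat{z}$. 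The main obstacle is the first structural step: carefully justifying that the minimization over the function space $\mathcal{U}$ separates into $K$ independent univariate problems at the support points, so that the spectral estimator is genuinely a $p_j$-weighted sum of $\cvar_{\alpha_j}$ estimators. Once this reduction is in place, the convergence is inherited from Proposition~\ref{prop:quadrangle_convergence} and the finite-sum argument is routine.
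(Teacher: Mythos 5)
Your proof is correct, but it takes a genuinely different route from the paper's. You exploit the finite support of $\mu$ at the outset to collapse the minimization over $\mathcal{U}$ into $K$ independent univariate CVaR problems---valid because $\E[r(Y,u)]$ depends on $u$ only through $u(\alpha_1),\ldots,u(\alpha_K)$ and is separable in those values---and then apply Proposition~\ref{prop:quadrangle_convergence} componentwise, assembling the $K$ almost-sure limits by linearity. The paper instead works directly in the function space: it restricts $\mathcal{U}$ to $L_2([0,1),\mu)$, verifies the reflexive-Banach-space, integrability, and subgradient conditions of Theorem~3.4 of \cite{king1991epi} for the full spectral objective (finite support enters through $\bar\alpha = \max \text{support}(\mu) < 1$, giving the bound $(1-\bar\alpha)^{-2}$), and then obtains a cluster point of $\{u_m^*\}_{m=1}^\infty$ by diagonalizing over the support points before concluding as in Proposition~\ref{prop:quadrangle_convergence}. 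Your decomposition is more elementary---it avoids the infinite-dimensional epi-convergence machinery entirely and makes the role of the finite-support hypothesis transparent---whereas the paper's $L_2$ framework is structured so that it could plausibly extend to representers with infinite support bounded away from $1$, which your reduction cannot. Two small points to tighten: (i) the spectral $\mathcal{U}$ constrains $u(\alpha) \geq 0$, while Proposition~\ref{prop:quadrangle_convergence} is stated for $\mathcal{U} = \R$; you should note that for non-negative $f_{\hat{x}}$ the population and sample quantiles are non-negative, so the constrained and unconstrained component minimizers agree and the componentwise application is legitimate; (ii) when $\alpha_j m$ is an integer the component argmin can be an interval rather than the single order statistic of Example~\ref{ex:cvar_tractability}, but Proposition~\ref{prop:quadrangle_convergence} accommodates an arbitrary selection of minimizers, so this does not affect the conclusion.
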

\begin{proof}
    The result follows from Theorem~3.4 in \cite{king1991epi}; we first check that its conditions are satisfied.
    An optimal solution of $\min_{u \in \mathcal{U}} \E\left[ r(f_{\hat{x}}, u) \right] = \hat{z}$ is given by $u^*(\alpha) = q_{\alpha}(f_{\hat{x}})$, $\alpha \in [0,1)$, where $q_{\alpha}(f_{\hat{x}})$ is the $\alpha$-quantile of $f_{\hat{x}}$. 
    Writing $\bar{\alpha} := \max\{\alpha \in [0,1) \ : \ \mu(\{\alpha\}) > 0\}$, we have $\int_{[0,1)} (u^*(\alpha))^2 d\mu(\alpha) \leq \big(q_{\bar{\alpha}}(f_{\hat{x}})\big)^2 < \infty$, so $u^* \in L_2([0,1), \mu)$. 
    So without loss, we can restrict $\mathcal{U}$ to the reflexive Banach space $L_2([0,1), \mu)$ with separable dual space $L_2([0,1), \mu)$. Moreover, for $\bar{u} = 0$, every subgradient $\delta(\omega)$ of $r(\bar{u}, \omega)$ with respect to $u(\alpha)$ falls within the interval $[-\frac{\alpha}{1-\alpha}, 1]$, which is bounded in absolute value by $(1-\alpha)^{-1}$. Thus, $\int_\Omega \int_{[0,1)} \delta(\omega)^2 d\mu(\alpha) d\Prob(\omega) \leq \int_{[0,1)} (1-\alpha)^{-2} d\mu(\alpha) 
    \leq (1 - \bar{\alpha})^{-2} < \infty$. 
    The remaining conditions follow from convexity and continuity of $r$ in $u$.
    Thus, Theorem~3.4 in \cite{king1991epi} applies. 
    Moreover, by Example~\ref{ex:cvar_quality} and the proof of Proposition~\ref{prop:quadrangle_convergence}, $\{u_m^*(\alpha)\}_{m=1}^\infty$ has a cluster point for each $\alpha \in \text{support}(\mu)$, w.p.1. Diagonalization over $\alpha \in \text{support}(\mu)$ yields a subsequence of $\{u^*_m\}_{m=1}^\infty$ with a cluster point, w.p.1.
    The result now follows analogously as in Proposition~\ref{prop:quadrangle_convergence}. \qed
\end{proof}

\section{Extension to law invariant coherent risk measures} \label{sec:coherent}

We conclude with an extension of our approach to law invariant coherent risk measures, which can be expressed as
\begin{align}\label{eqn:licrm}
    \rho(Y) = \sup_{\mu \in \mathcal{M}} \int_{[0,1)} \cvar_\alpha(Y) d\mu(\alpha), \qquad Y \in\mathcal{Y},
\end{align}
and generalize spectral risk measures. We restrict attention to the space $\mathcal{Y}$ of bounded random variables $Y$. Here, $\mathcal{M}$ is a closed convex set of probability measures on $[0,1)$ that does not depend on the probability measure $\Prob$ \cite{dentcheva2020risk}. 

For these risk measures, the direction of the bias of a nominal sample estimator $\rho^n$ of $\rho$ is not clear a priori. On the one hand, overfitting the optimizer in the minimization representation of each $\cvar_\alpha$ to the sample distribution leads to a negative bias. While this would be problematic for an estimator of $\hat{z}$, it can be solved in the exact same way as for spectral risk measures, i.e., by using $u^*_m(\alpha)$, $\alpha \in [0,1)$, based on a second sample. This leads to the estimator
\begin{align}
    \hat{z}_{n,m} := \sup_{\mu \in \mathcal{M}} \int_{[0,1)} \left( u^*_m(\alpha) + (1 - \alpha)^{-1} \E^n\left[ (f_{\hat{x}} - u^*_m(\alpha))^+ \right] \right) d\mu(\alpha), \label{eq:z_hat_nm_coherent}
\end{align}
satisfying $\E^{\mathcal{S}_n}\big[ \hat{z}_{n,m}\big] \geq \hat{z}$.

On the other hand, overfitting the optimizer over $\mu \in \mathcal{M}$ to the sample distribution leads to an upward bias. This is problematic for the sample estimator $\hat{z}^*_n$ of $z^*$, which is now not guaranteed to have a downward bias. To solve this, we propose an analogous approach and estimate the optimal $\mu \in \mathcal{M}$ using a \textit{third} sample $\mathcal{S}_{\ell}$, independent of $\mathcal{S}_n$ (but not necessarily $\mathcal{S}_m$). Specifically, for
\begin{align}
    \mu^*_{\ell} \in \argmax_{\mu \in \mathcal{M}} \int_{[0,1)} \cvar_\alpha^{\ell}(f_{\hat{x}}) d\mu(\alpha), \label{eq:mu_l}
\end{align}
we define the two-sample estimator
\begin{align*}
    z^*_{n,\ell} := \min_{x \in \mathcal{X}} \int_{[0,1)} \cvar_{\alpha}^n(f_x) d\mu^*_{\ell}(\alpha).
\end{align*}
Then, by an analogous argument as for $\hat{z}_{n,m}$, we have $\E^{\mathcal{S}_n}\big[z^*_{n,\ell}\big] \leq z^*$. 

As a consequence, the resulting optimality gap estimator
\begin{align*}
    G_{n,m,\ell} := \hat{z}_{n,m} - z^*_{n,\ell}
\end{align*}
is upward biased. This can be used to construct a probabilistic upper bound for the gap $G$ using the multiple replications procedure \cite{mak1999monte}. Due to the supremum over $\mu \in \mathcal{M}$ in equation~\eqref{eqn:licrm}, Section~\ref{sec:conf_interval}'s risk-neutral reformulation does not hold, so investigating whether approaches such as the one- and two-replications procedure \cite{bayraksan2006assessing} also extend to this case is a topic for further research.


%
In terms of tractability, the integrals in~\eqref{eq:z_hat_nm_coherent} and~\eqref{eq:mu_l} can be treated as in Section~\ref{subsubsec:spectral_tractability}. The maximizations over $\mu \in \mathcal{M}$ in \eqref{eq:z_hat_nm_coherent} and \eqref{eq:mu_l} then simplify, although their tractability depends on the form of $\mathcal{M}$. Investigating this in more detail is also a topic for further research.
%
A final topic for further research is to investigate the quality of $G_{n,m,\ell}$ as a function of the sample sizes $m$ and $\ell$ of the fresh samples used to construct $\hat{z}_{n,m}$ and $z^*_{n,\ell}$, respectively.


\begin{acknowledgements}
The authors thank Andrzej Ruszczy{\'n}ski, Darinka Dentcheva, and Tito Homem-de-Mello for valuable comments on earlier versions of this work.
\end{acknowledgements}

%
%

\bibliographystyle{spmpsci}      
\bibliography{references_dpm}   

\begin{thebibliography}{10}
\providecommand{\url}[1]{{#1}}
\providecommand{\urlprefix}{URL }
\expandafter\ifx\csname urlstyle\endcsname\relax
  \providecommand{\doi}[1]{DOI~\discretionary{}{}{}#1}\else
  \providecommand{\doi}{DOI~\discretionary{}{}{}\begingroup \urlstyle{rm}\Url}\fi

\bibitem{acerbi2002spectral}
Acerbi, C.: Spectral measures of risk: A coherent representation of subjective risk aversion.
\newblock Journal of Banking \& Finance \textbf{26}(7), 1505--1518 (2002)

\bibitem{acerbi2002portfolio}
Acerbi, C., Simonetti, P.: Portfolio optimization with spectral measures of risk.
\newblock arXiv preprint cond-mat/0203607  (2002)

\bibitem{bayraksan2009assessing}
Bayraksan, G., Morton, D.: Assessing solution quality in stochastic programs via sampling.
\newblock In: INFORMS Tutorials in Operations Research, pp. 102--122 (2009)

\bibitem{bayraksan2006assessing}
Bayraksan, G., Morton, D.P.: Assessing solution quality in stochastic programs.
\newblock Mathematical Programming \textbf{108}(2), 495--514 (2006)

\bibitem{chen2021confidence}
Chen, X., Cazaux, S., Knight, B.C., Woodruff, D.L.: Confidence interval software for multi-stage stochastic programs.
\newblock Optimization Online  (2021)

\bibitem{chen2023software}
Chen, X., Woodruff, D.L.: Software for data-based stochastic programming using bootstrap estimation.
\newblock INFORMS Journal on Computing \textbf{35}(6), 1218--1224 (2023)

\bibitem{chen2024distributions}
Chen, X., Woodruff, D.L.: Distributions and bootstrap for data-based stochastic programming.
\newblock Computational Management Science \textbf{21}(1), 33 (2024)

\bibitem{chiralaksanakul2004assessing}
Chiralaksanakul, A., Morton, D.P.: Assessing policy quality in multi-stage stochastic programming.
\newblock Stochastic Programming E-Print Series (2004)

\bibitem{de2017assessing}
De~Matos, V.L., Morton, D.P., Finardi, E.C.: Assessing policy quality in a multistage stochastic program for long-term hydrothermal scheduling.
\newblock Annals of Operations Research \textbf{253}, 713--731 (2017)

\bibitem{dentcheva2020risk}
Dentcheva, D., Ruszczy{\'n}ski, A.: Risk forms: representation, disintegration, and application to partially observable two-stage systems.
\newblock Mathematical Programming \textbf{181}, 297--317 (2020)

\bibitem{king1991epi}
King, A.J., Wets, R.J.: Epi-consistency of convex stochastic programs.
\newblock Stochastics and Stochastic Reports \textbf{34}(1-2), 83--92 (1991)

\bibitem{Kleywegt2002}
Kleywegt, A.J., Shapiro, A., Homem-de mello, T.: {The sample average approximation method for stochastic discrete optimization}.
\newblock SIAM Journal on Optimization \textbf{12}(2), 479--502 (2002)

\bibitem{lam2017empirical}
Lam, H., Zhou, E.: The empirical likelihood approach to quantifying uncertainty in sample average approximation.
\newblock Operations Research Letters \textbf{45}(4), 301--307 (2017)

\bibitem{linderoth2006empirical}
Linderoth, J., Shapiro, A., Wright, S.: The empirical behavior of sampling methods for stochastic programming.
\newblock Annals of Operations Research \textbf{142}(1), 215--241 (2006)

\bibitem{love2011overlapping}
Love, D., Bayraksan, G.: Overlapping batches for the assessment of solution quality in stochastic programs.
\newblock In: Winter Simulation Conference, pp. 4179--4190 (2011)

\bibitem{mak1999monte}
Mak, W.K., Morton, D.P., Wood, R.K.: Monte {C}arlo bounding techniques for determining solution quality in stochastic programs.
\newblock Operations Research Letters \textbf{24}(1-2), 47--56 (1999)

\bibitem{narum2023safe}
Narum, B.S., Maggioni, F., Wallace, S.W.: On the safe side of stochastic programming: bounds and approximations.
\newblock International Transactions in Operational Research \textbf{30}(6), 3201--3237 (2023)

\bibitem{norkin1998branch}
Norkin, V.I., Pflug, G.C., Ruszczy{\'n}ski, A.: A branch and bound method for stochastic global optimization.
\newblock Mathematical Programming \textbf{83}, 425--450 (1998)

\bibitem{partani2006jackknife}
Partani, A., Morton, D., Popova, I.: Jackknife estimators for reducing bias in asset allocation.
\newblock In: Winter Simulation Conference, pp. 783--791 (2006)

\bibitem{rockafellar2013fundamental}
Rockafellar, R.T., Uryasev, S.: The fundamental risk quadrangle in risk management, optimization and statistical estimation.
\newblock Surveys in Operations Research and Management Science \textbf{18}(1-2), 33--53 (2013)

\bibitem{rockafellar1998variational}
Rockafellar, R.T., Wets, R.J.B.: Variational Analysis.
\newblock Springer Science \& Business Media (1998)

\bibitem{stockbridge2016variance}
Stockbridge, R., Bayraksan, G.: Variance reduction in {Monte Carlo} sampling-based optimality gap estimators for two-stage stochastic linear programming.
\newblock Computational Optimization and Applications \textbf{64}, 407--431 (2016)

\end{thebibliography}

\end{document}